\newtheorem{theo}{Theorem}
\newtheorem{prop}{Proposition}
\newtheorem{lem}{Lemma}
\theoremstyle{definition}
\newtheorem{defi}{Definition}
\newtheorem{ex}{Example}
\newtheorem{remarque}{Remark}
\newcommand{\RR}{\mathbb{R}}
\newcommand{\ZZ}{\mathbb{Z}}
\begin{document}

\title{The systolic constant of orientable Bieberbach 3-manifolds}

\author{Chady El Mir}

\author{Jacques Lafontaine}

\date{\today}

\maketitle

\begin{center}
Department of Mathematics and computer science\\
Beirut Arab University \\
P.O.Box 11 - 50 - 20 Riad El Solh 11072809\\
              Beirut, Lebanon\\ 
email: {c.elmir@bau.edu.lb}\\

\&\\

              Institut de Math\'ematiques et Mod\'elisation de Montpellier \\
CNRS, UMR 5149\\
Universit\'e Montpellier 2\\
              CC 0051, Place Eug\`ene Bataillon\\
              F-34095 Montpellier Cedex 5, France\\
          
email: {jaclaf@math.univ-montp2.fr}

\end{center}

\begin{abstract}
A compact manifold is called \emph{Bieberbach} if it carries a flat Riemannian metric. Bieberbach manifolds are aspherical, therefore the supremum of their systolic ratio, over the set of Riemannian metrics, is finite by a fundamental result of M.~Gromov.

We study the optimal systolic ratio of compact of $3$-dimensional orientable Bieberbach manifolds which are not tori,
and prove that it cannot be realized by a flat metric. We also highlight a metric that we construct on one type of such manifolds ($C_2$)
which has interesting geometric properties :
it is extremal in its conformal class and the systole is realized by ``very many'' geodesics.\\

{\sc R\'esum\'e}. Une vari\'et\'e compacte est appel\'ee de Bieberbach si elle porte une m\'etrique riemannienne plate. Les vari\'et\'es de Bieberbach sont asph\'eriques, par cons\'equent le supremum de leur quotient systolique, sur l'ensemble des m\'etriques riemanniennes, est fini d'apr\`es un r\'esultat fondamental de M.~Gromov.

On \'etudie le quotient systolique optimal des $3$-vari\'et\'es de Bieberbach compactes et orientables qui ne sont pas des tores, et on d\'emontre qu'il n'est pas r\'ealis\'e par une m\'etrique plate. De plus, on met en \'evidence une m\'etrique que l'on construit sur un type de telles vari\'et\'es ($C_2$)
qui a une g\'eom\'etrie int\'eressante~: elle est extr\^emale dans sa classe conforme et poss\`ede
de ``nombreuses'' g\'eod\'esiques systoliques.

\vspace{0.5cm}
{\it{Key words and phrases.}}  Systole; isosystolic inequality; singular Riemannian metric; Bieberbach manifold.

\vspace{0.5cm}
{\it{Mathematics subject classification (2000).}} 53C23-53C22-53C20.

\end{abstract}

\section{Introduction}

\subsection{Motivations and main result}

The systole of a compact non simply connected Riemannian manifold $(M^n,g)$ is the shortest length of a non contractible closed curve, we denote it by $\mathrm{Sys}(g)$. We are interested in the \emph{systolic ratio}$\frac{\mathrm{Sys}(g)^n}{\mathrm{Vol}(g)}$, which is a scale-invariant quantity.
  Note that it is well defined even if $g$ is only continuous, i.e. a continuous section of the fiber bundle $S^2T^\ast M$ of symmetric forms.

An isosystolic inequality on a manifold $M$ is a inequality of the form

$$\frac{\mathrm{Sys}(g)^n}{\mathrm{Vol}(g)} \leq C  <+\infty$$

that holds for any Riemannian metric $g$ on $M$. The smallest such constant $C$ is called the \emph{systolic constant}.

A \emph{systolic geodesic} is a closed curve, not homotopically trivial, whose length is equal to the systole.

A precursory work about systolic inequalities is that of Charles Loewner (1949, unpublished) and his student P. M. Pu (\cite{pu}).
The fundamental article of Misha Gromov (cf. \cite{gromov} ) was the birth certificate
of \emph{systolic geometry}, that is the study of systolic inequalities on manifolds.
For more details about the subject see the survey  \cite{bki} of Marcel Berger and the book \cite{katz} of Mikha\"il Katz.

As far as this work is concerned, we put emphasis on two facts :

a)  Compact manifolds which are covered by $\RR^n$  do satisfy an isosystolic inequality (this is just
an application of the general result of  \cite{gromov}.

b) The systolic constant is in general fairly non explicit (see \cite{gromov} again).

In fact, the supremum of $\frac{\mathrm{sys}(g)^n}{\mathrm{vol}(g)}$ is known in three cases only:
\begin{enumerate}
 \item  The torus $T^2$: the supremum is achieved for the flat hexagonal metric (C. Loewner, 1949, see \cite{ghl}, p.95)
\item The real projective plane : the supremum is achieved for the constant curvature metric (P.M. Pu 1952, see \cite{pu}). 

\item The Klein bottle : the supremum is achieved for a singular Riemannian metric,
with constant curvature $+1$ outside a singular curve (C. Bavard, see \cite{bavard}).
\end{enumerate}

However, if the systole is replaced by the so-called  \emph{stable systole} $\mathrm{stsys}(g)$, the explicit inequality
$$\frac{\mathrm{stsys}(g)^n}{\mathrm{Vol}(g)} \leq (\gamma_n)^{n/2}$$
(where  $\gamma_n$ 
denotes the Hermite constant) 
holds for the $n$-dimensional torus,
and the bound is attained for some flat tori. 
See \cite{katz}, ch. 17, for a more general result (Burago-Ivanov-Gromov inequality), and details
about the stable systole.

In the present work, we are interested in \emph{Bieberbach manifolds}, i.e.
compact manifolds that carry a flat Riemannian metric. Standard Riemannian geometry shows that these manifolds are covered 
by $\RR^n$, and therefore satisfy an isosystolic inequality.  

For $n=3$, there are six orientable Beberbach manifolds (Hantzsche--Wendt classification, see  \cite{wolf} or \ref{classi} below).
One of them is the torus  $T^3$ for which, in contrast with the stable case, it is not known whether the optimal metric  is flat.
As for the other manifolds, since their first Betti number is strictly smaller than $3$,
the assumptions of Burago-Ivanov-Gromov inequality are not satisfied.

Our result is the following :

\begin{theo}
Let $M$ be a Bieberbach orientable manifold of dimension $3$ that is not a torus.
Then there exists on $M$ a Riemannian metric $g$ such that,
for any flat metric $h$,
$$
\frac{(\mathrm{sys}(h))^3}{\mathrm{vol}(h)} <   \frac{(\mathrm{sys}(g))^3}{\mathrm{vol}(g)}
$$
\end{theo}
(See theorem \ref{c2sing} for an example of a more precise statement.)

We already proved the same result for non-orientable $3$-dimensional Bieberbach manifolds (see \cite{E-L}). The main idea consisted in the fact that we can get any such manifold by suspending a  Klein bottle. Then, by taking a suitable metric suspension of the Bavard metric, one can beat the systolic ratio of flat metrics.
The orientable case is a bit more involved.

Actually this technique does not work for the orientable case, since the manifolds cannot be foliated by Klein bottles or M\"obius bands as in the non-orientable case. The key idea is that all these manifolds contain isolated systolic geodesics in the case of the extremal metric among the flat ones. Therefore we will be able to decrease the volume by "digging"  around the geodesic systolic  \emph{without shortening the systole} (multiplying the metric in the direction of this systolic geodesic by a function equal to 1 on the geodesic and less than 1 in its neighborhood).

The case of the manifold $C_{2,2}$ (see \ref{classi} below) is straightforward thanks to the following (folk) result.

\emph{If $g$ is an extremal Riemannian metric (possibly singular) on $M$, the systolic geodesics do cover $M$} (see \cite{calabi}).\\

This property is satisfied by flat tori, and real projectives endowed with their metrics of constant curvature.
On Bieberbach manifolds of dimension $3$, the metrics that optimize the systolic ratio \emph{among flat metrics}
also satisfy this property, except for the manifold $C_{2,2}$ (cf. \cite{wolf} p.117-118, and the suggestive figure of \cite{th}, p.236). This  property gives the result for $C_{2,2}$.

The metrics that we construct also satisfy this property, and so there is no obvious obstacle that prevents them from being extremal. Actually we highlight the metric we construct on the manifold $C_2$, (see sections \ref{c2ext} and \ref{torusC2}) because it verifies 
an even stronger property : \emph{it is covered by systolic geodesics of any systolic class}
(a systolic class is a free homotopy class which  contains at least one systolic geodesic).

It is worth-noting that the optimal flat metric on $C_2$ \emph{does not} satisfy this property.
 We also prove in section \ref{c2ext} that this metric is extremal in its conformal class.

\section{Flat manifolds}

\subsection{Classification of flat manifolds}
Compact flat manifolds are quotients $\mathbb{R}^n/\Gamma$, where $\Gamma$ is a discrete cocompact subgroup of affine isometries of $\RR^n$ acting freely. By the theorem of Bieberbach $\Gamma$ is an extension of a finite group $G$ by a lattice $\Lambda$ of $\RR^n$ (see for example \cite{charlap}). This lattice is the subgroup of the elements of $\Gamma$ that are translations, we obtain then the following exact sequence:

$$
0\longrightarrow\Lambda\longrightarrow\Gamma\longrightarrow G\longrightarrow 1
$$

Actually, if $M$ is a flat manifold, $M$ is the quotient of the flat torus $\mathbb{R}^n/\Lambda$ by an isometry group isomorphic to $G$.
Two compact and flat manifolds $\RR^n/\Gamma$ and $\RR^n/\Gamma^\prime$ are homeomorphic if and only if the groups $\Gamma$ and $\Gamma^\prime$ are isomorphic. Such groups are then conjugate by an affine isomorphism of $\RR^n$: two compact and flat homeomorphic manifolds are affinely diffeomorphic.

\subsection{3-dimensional orientable flat manifolds} \label{classi}

 The classification of flat manifolds of dimension $3$ results of a direct method of classification of discrete, cocompact subgroups of $\mathrm{Isom}(\RR^3)$ operating freely. 
This classification is due to W. Hantzsche and H. Wendt  (1935), and exposed in the book \cite{wolf} of J.A. Wolf.
There exist ten compact and flat manifolds of dimension $3$ up to diffeomorphism, six are orientable and four are not.

In the orientable case, the types are caracterized by the holonomy group $G$, this reason motivates
the notation of W. Thurston (cf. \cite{th} p.235) that we follow.
 A rotation of angle $\alpha$ around an axis $a$ will be denoted by $r_{a,\alpha}$.\\

i) $G=\{1\}$: type $C_1$. This is the torus, it is the quotient of $\RR^3$ by an arbitrary lattice of $\RR^3$.

ii)  $G=\ZZ_2$: type  $C_2$. Given a basis $(a_1,a_2,a_3)$ of $\RR^3$ with $a_3 \perp (a_1,a_2)$, let $\Gamma$ be the subgroup of isometries of $\RR^3$ generated by $t_{a_3/2} \circ r_{a_3,\pi}$ and the translations $ t_{a_1}$ and $t_{a_2}$. The quotient $\RR^3/\Gamma$ is a manifold of type $C_2$. 
The lattice $\Lambda$ generated by $t_{a_1}$, $t_{a_2}$ et $t_{a_3}$ is of index $2$ in $\Gamma$.\\

This manifold is the quotient of the torus $\RR^3 /\Lambda$ by the cyclic group of order $2$ generated by (the quotient map of) $t_{a_3/2} \circ r_{a_3,\pi}$.\\

iii) $G=\ZZ_4$: type $C_4$. Given an orthogonal basis $(a_1,a_2,a_3)$ of $\RR^3$ with $|a_1|=|a_2|$, let $\Gamma$ be the subgroup of isometries of $\RR^3$ generated by $t_{a_3/4} \circ r_{a_3,\pi/2}$ and the translations $ t_{a_1}$ et $t_{a_2}$. The quotient $\RR^3/\Gamma$ is a manifold of type $C_4$. The lattice $\Lambda$ generated by $t_{a_1}$,$t_{a_2}$ and $t_{a_3}$ is of index $4$ in $\Gamma$.\\

This manifold is the quotient of the torus $\RR^3 /\Lambda$ by the cyclic group of order $4$ generated by (the image of) $t_{a_3/4} \circ r_{a_3,\pi/2}$. It is also the quotient of $C_2$ (the basis $(a_1,a_2,a_3)$ should be chosen orthogonal with $|a_1|=|a_2|$), by the subgroup generated by $t_{a_3/4} \circ r_{a_3,\pi/2}$.

iv)  $G=\ZZ_6$: type $ C_6$. Given a basis $(a_1,a_2,a_3)$ of $\RR^3$ with $a_3 \perp (a_1,a_2)$, $|a_1|=|a_2|$ and $(a_1,a_2)= \pi/3$, let $\Gamma$ be the subgroup of isometries of $\RR^3$ generated by $t_{a_3/6} \circ r_{a_3,\pi/3}$ and the translations $ t_{a_1}$ et $t_{a_2}$. The quotient $\RR^3/\Gamma$ is a manifold of type $C_6$. The lattice $\Lambda$ generated by $t_{a_1}$,$t_{a_2}$ and $t_{a_3}$ is of index $6$ in $\Gamma$.\\

The manifold $C_6$ is the quotient of the torus $\RR^3 /\Lambda$ by the cyclic group of order $6$ generated by (the image of) $t_{a_3/6} \circ r_{a_3,\pi/3}$. It is also the quotient of $C_2$ by the subgroup generated by $t_{a_3/6} \circ r_{a_1,\pi/3}$.\\

v) $G=\ZZ_3$: type $C_3$. Given a basis $(a_1,a_2,a_3)$ of $\RR^3$ with $a_3 \perp (a_1,a_2)$, $|a_1|=|a_2|$ and $(a_1,a_2)= 2\pi/3$, let $\Gamma$ be the subgroup of isometries of $\RR^3$ generated by $t_{a_3/3} \circ r_{a_3,2\pi/3}$ and the translations $ t_{a_1}$ et $t_{a_2}$. The quotient $\RR^3/\Gamma$ is a manifold of type $C_3$.  The lattice $\Lambda$ generated by $t_{a_1}$,$t_{a_2}$ and $t_{a_3}$ is of index $3$ in $\Gamma$. This manifold is the quotient of the torus $\RR^3 /\Lambda$ but it is not a quotient of $C_2$.\\

vi) $G=\ZZ_2 \times \ZZ_2$: type $C_{2,2}$. Given an orthogonal basis $(a_1,a_2,a_3)$ of $\RR^3$, let $\Gamma$ be the subgroup of isometries of $\RR^3$ generated by $t_{a_1/2} \circ r_{a_1,\pi}$, $t_{(\frac{a_1+a_2}{2})} \circ r_{a_2, \pi}$ and $t_{(\frac{a_1+a_2+a_3}{2})} \circ r_{a_3, \pi}$. The quotient $\RR^3/ \Gamma$ is the manifold $C_{2,2}$. This time, the  holonomy group $G$ is not cyclic, it is equal isomorphic to $\ZZ_2 \times \ZZ_2$.

\section{Singular metrics on Bieberbach manifolds} \label{sing}

All the singular metrics we will use give rise to \emph{length spaces} (cf. \cite{bbi}).
 In this section  we will define these metrics in the setting of Riemannian polyhedrons. For further details on this notion see \cite{babenko2}.\\
A polyhedron is a topological space endowed with a triangulation, i.e.  divided into simplexes glued together by their faces. We denote  by $\Sigma$ an arbitrary simplexe of a polyhedron $P$.\\

\begin{defi} 
A Riemannian metric on a polyhedron $P$ is a family of Riemannian metrics $\{g_\Sigma\}_{\Sigma \in I}$, where $I$ is in bijection with the set of simplexes of $P$. 
These metrics should satisfy the following conditions: \begin{enumerate}

\item Every $g_\Sigma$ is a smooth metric in the interior of the simplex $\Sigma$.

\item The metrics $g_\Sigma$ co\"\i ncide on the  faces; i.e. for any pair of simplexes $\Sigma_1$, $\Sigma_2$, we have the equality
$$g_{\Sigma_1}|_{\Sigma_1 \bigcap \Sigma_2}=g_{\Sigma_2}|_{\Sigma_1 \bigcap \Sigma_2}$$
\end{enumerate}
\end{defi}

Such a Riemannian structure on the polyhedron  allows us to calculate the length of any piecewise smooth curve in $P$, this way, the polyhedron $(P,g)$ turns out to be  a length space.

Note that the Riemannian measure is defined exactly as in the smooth case. The volume of the singular part is equal to zero.\\ 

The geodesics of a Riemannian polyhedron are the geodesics of the associated length structure (see \cite{bbi}). 
In the interior of a simplex $(\Sigma, g_\Sigma)$, the first variation formula shows that such a geodesic is a geodesic of $g_\Sigma$ in the Riemannian sense (see \cite{E-L} p. 102 for an example). 

\subsection{The Klein-Bavard bottle}

We know by Bavard (\cite{bavard}) that the unique extremal Klein bottle for the isosystolic inequality is not flat (see also \cite{gromov}), it is singular and has curvature equal to  $+1$ outside the singularities:

We start with the round sphere, and we locate the points by their latitude $\phi$ and their longitude $\theta$. For $\phi_o\in ]0,\pi/2[$, let $\Sigma_{\phi_o}$ be the domain defined by  $\vert\phi\vert \le \phi_o$. In $\Sigma_{\phi_o}$, the round metric is given by $d\phi^2 +\cos^2\phi d\theta^2$. Note that the universal cover of $\Sigma_{\phi_o}$ is the strip $\RR \times [-\phi_o,\phi_o]$ with the same metric. Here we introduce in $\RR^2$ the singular Riemannian metric

\begin{equation}
d\phi^2 +f^2(\phi)d\theta^2,
\label{sph}
\end{equation}

where $f$ is the $2\phi_0$ periodic function which agrees with $cos\phi$ in the interval $[-\phi_o, \phi_o]$.

\begin{ex} The metric on the Klein bottle that gives the best systolic ratio ($\frac{\pi}{2 \sqrt{2}}$) is obtained for $\phi_o=\frac{\pi}{4}$ by taking the quotient of the plane endowed with the metric  \ref{sph}
by the action of the group generated by

$$ (\theta,\phi)\mapsto (\theta+\pi,-\phi)\quad\hbox{et}
\quad  (\theta,\phi)\mapsto (\theta,\phi+ 4\phi_0).$$
\end{ex}

For more details on the Klein-Bavard bottle see \cite{E-L}, \cite{bavard2} and \cite{bavard4}.

\begin{remarque}

It may seem more natural to take the quotient of the plane (endowed with the metric  \ref{sph}) by the group generated by

$$ (\theta,\phi)\mapsto (\theta+\pi,-\phi)\quad\hbox{and}
\quad  (\theta,\phi)\mapsto (\theta,\phi+ 2\phi_0)$$

the surface we obtain is indeed a Klein bottle but it does not give the best systolic ratio: the geodesics closed by the correspondence $(\theta,\phi)\mapsto (\theta,\phi+ 2\phi_0)$ have a length equal to $\pi/2$ whereas the ones closed by the correspondence $(\theta,\phi)\mapsto (\theta,\phi+ 2\phi_0)$ have length $\pi$. It is then possible to reduce the volume without shortening the systole by reducing the metric in the direction of the long closed curves.

\end{remarque}

\subsection{Singular metrics on orientable Bieberbach manifolds}

\label{r3h}

Starting with an arbitrary lattice $\Delta $ of $\RR^2$, we introduce the associated Dirichlet-Vorono\"i  tiling. It is a tiling of the plane by hexagons (or rectangles if the lattice $\Delta$ is rectangle) $A_p$ centered at the  points $p$ of the lattice. Then a lattice of $\RR^3$ of the form $\Delta \times c \ZZ$, where $c >0$, allows us to tile $\RR^3$  naturally with hexagonal or rectangular prisms that we denote by $D_p$.\\

Now we introduce on $\RR^3$ the Riemannian singular metric $h(m)= dx^2 + dy^2 + \psi(m) dz^2$, 
where we set, for $m(x,y,z) \in D_p$,  $\psi(m)= \cos ^2 \mathrm{dist}\big((x,y),p\big)$, with $\mathrm{dist}\big((x,y),p\big)< \pi/2$. If $m $ is in two domains $D_p$ and $D_{p'}$ then $p$ and $p'$ are at the same distance from $m$: 
 the map $\psi$ is well defined. It is continuous,
but it is not $C^1$.  The connected component of the identity in the group of isometries of $(\RR^3,h)$ consists of the vertical translations $(x,y,z)\mapsto (x,y,z+c')$.
The translations by the vectors of $\Delta$ are also isometries. It is important to note that the metric $h$ can also be written in the form $dx^2+dy^2 +\cos ^2\big(d((x,y),\Delta)\big)$, where $d((x,y),\Delta)$ is the  distance from the point $(x,y)$ to the lattice $\Delta$. \\

The quotient of $(\RR^3,h)$ by the group $\Delta\times c\ZZ$ is a singular torus of dimension $3$.
We denote by $(T,h)$ this special torus.
The sections of $(T,h)$ by the planes $z=constant$ are $2-$dimensional totally geodesic flat tori. All these flat tori are isometric to $\RR^2/ \Delta$. Note that the map from $(T,h)$ onto the torus $\RR^2 / \Delta$, which consists in projecting onto the torus $z=0$, is a Riemannian submersion.\\

With a good choice of the lattice $\Delta$, the transformations $t_{a_3/n} \circ r_{a_3,2\pi/n}$ $(n=2,3,4,6)$, described in the classification of the flat orientable manifolds, become isometries of $(T,h)$ (The lattice $\Delta$ should be square to get $C_4$ and hexagonal to get $C_3$ and $C_6$). This way we get a family of singular Riemannian metrics on the manifolds of type $C_2$, $C_3$, $C_4$ and $C_6$.\\

\begin{remarque}
Actually this construction also works if we take the quotient of $(\RR^3,h)$ by the lattice $n \Delta \times c\ZZ$ ($n \in \mathbb{N}^*$). Starting with this torus, we can re-obtain all the manifolds $C_i$ $(i=2,3,4,6)$ exactly the same way as for the torus $(T,h)$. We will see in the next sections that taking $n=2$ is more useful to get "good systolic ratios" on these manifolds.

\end{remarque}

\section{Two singular tori and their systole} \label{toresing}

We take the quotient of the Riemannian singular space $(\RR^3,h)$ seen in section \ref{r3h}
by the lattice $2\Delta\times 2 \pi\ZZ$. We get a  $3$-dimensional torus $(T,g)$ whose singular locus is connected.
It can be described as follows: the tiling of $\RR^3$ defined
by the Vorono\"i domains associated to $\Delta$ gives us
a triangulation (with four hexagonal faces in the generic case, four rectangular faces if the
lattice is rectangular) of $\RR^2/2\Delta$. Let $L$ be the 1-skeleton of
this triangulation. The singular locus is $L\times \RR/2\pi \ZZ$.

The sections of $(T,g)$ by planes containing the axis of a domain $D_p$ are surfaces of curvature $+1$ as long as we stay in the interior of $ D_p$. These surfaces, that we denote by $S_p$, are actually spherical cylinders with boundary.

\begin{remarque} To preserve the systole and reduce the volume of the manifolds of type $C_i$ we have to take the quotient of $(\RR^3,h)$ by the lattice $2 \Delta \times 2 \pi \ZZ$ and not by $\Delta \times 2 \pi \ZZ$. This prevents shortening closed curves at the level of the surfaces $S_p$.

\end{remarque}

\subsection{The torus $(T,g_c)$}\label{squaretorus}
First suppose that the lattice $\Delta$ is square and generated by two vectors of norm $2a >0 $. This lattice is generated by three translations $t_1:(x,y,z) \longrightarrow (x+4a,y,z) $, $t_2:(x,y,z) \longrightarrow (x,y+4a,z) $ and  $t_3:(x,y,z) \longrightarrow (x,y,z + 2 \pi)$. We denote by $(T,g_c)$ the quotient torus.\\

Note that the symmetries with respect to the surfaces $x=pa$ and $y=qa$ where $p,q \in \ZZ$, are isometries of $(T,g_c)$.

\begin{lem} \label{sysgc}
The systole of $(T,g_c)$ is equal to $\inf \{4a,2\pi \cos (a \sqrt{2})\}$.
\end{lem}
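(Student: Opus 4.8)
The plan is to bound from below the length of an arbitrary non-contractible loop by $\inf\{4a,\,2\pi\cos(a\sqrt{2})\}$, and then to exhibit explicit loops realizing each of the two values. I would organize the argument according to the homotopy class. Writing $\pi_1(T)=\ZZ^3$ with the basis $[t_1],[t_2],[t_3]$ coming from the three generating translations, a closed curve $\gamma$ represents a class $(m,n,k)$, and it is non-contractible exactly when $(m,n,k)\ne(0,0,0)$. Along $\gamma(t)=(x(t),y(t),z(t))$ the length is
$$
l(\gamma)=\int \sqrt{x'^2+y'^2+\psi(x,y)\,z'^2}\,dt,
\qquad \psi=\cos^2\big(d((x,y),\Delta)\big),
$$
and the two elementary inequalities $\sqrt{x'^2+y'^2+\psi z'^2}\ge \sqrt{x'^2+y'^2}$ and $\sqrt{x'^2+y'^2+\psi z'^2}\ge \sqrt{\psi}\,|z'|$ are what drive the two cases.

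First suppose $(m,n)\ne(0,0)$. Then the horizontal projection $\pi_{xy}\colon(x,y,z)\mapsto(x,y)$ sends $\gamma$ to a loop $\bar\gamma$ of class $(m,n)\ne(0,0)$ in the flat square torus $\RR^2/2\Delta$ of side $4a$. The first inequality above says precisely that $\pi_{xy}$ is $1$-Lipschitz, so $l(\gamma)\ge l(\bar\gamma)$; and since the systole of that flat torus is its shortest lattice vector $4a$, we get $l(\gamma)\ge 4a$. This bound is attained by the horizontal loop $t\mapsto(t,y_0,z_0)$, $t\in[0,4a]$, whose length is exactly $4a$ because the metric restricted to the $x$-direction is $dx^2$.

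Now suppose $(m,n)=(0,0)$ and $k\ne 0$. The key geometric fact is that the distance from any point of the plane to the square lattice $\Delta$ is maximal at the Voronoi-cell corners, where it equals $a\sqrt{2}$; since we assume $a\sqrt{2}<\pi/2$, the cosine is positive and decreasing there, so $\sqrt{\psi}=\cos\big(d((x,y),\Delta)\big)\ge\cos(a\sqrt{2})>0$ everywhere. Using the second inequality together with the fact that the $z$-coordinate must wind $k$ times (hence $\int|z'|\,dt\ge 2\pi|k|$),
$$
l(\gamma)\ge \int \sqrt{\psi}\,|z'|\,dt \ge \cos(a\sqrt{2})\int|z'|\,dt \ge 2\pi|k|\cos(a\sqrt{2})\ge 2\pi\cos(a\sqrt{2}).
$$
Equality is realized (for $k=\pm1$) by the vertical loop through a corner point such as $(a,a)$, namely $t\mapsto(a,a,t)$, $t\in[0,2\pi]$, along which $\psi\equiv\cos^2(a\sqrt{2})$ is constant, so its length is $2\pi\cos(a\sqrt{2})$.

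Every non-contractible class falls into one of these two cases (a class with $(m,n)\ne(0,0)$ is covered by the first whatever the value of $k$), so the systole is the smaller of the two realized values, $\inf\{4a,\,2\pi\cos(a\sqrt{2})\}$. I do not expect a genuine obstacle here. The only points needing care are that lengths are well defined across the singular surfaces $x=a,3a$ and $y=a,3a$ — they are, because the length integral involves only the continuous coefficient $\psi$, so curves lying in or crossing these surfaces have perfectly well-defined length — and the standard identification of the systole of a flat square torus with its shortest lattice vector. The one step worth stating cleanly is the winding estimate $\int|z'|\,dt\ge 2\pi|k|$: when $(m,n)=(0,0)$ the horizontal lift of $\gamma$ to $\RR^3$ closes up, so the $z$-coordinate changes by exactly $2\pi k$ and its total variation is at least $2\pi|k|$.
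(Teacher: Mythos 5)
Your proof is correct, and its two key estimates are exactly the paper's: dropping $y'^2+\psi z'^2$ from the integrand to bound horizontal classes by $4a$, and the pointwise bound $\sqrt{\psi}\ge\cos(a\sqrt 2)$ (attained on the vertical edges of the prisms) combined with the $z$-displacement to get $2\pi\cos(a\sqrt 2)$. The difference is organizational, and in your favor: the paper argues generator by generator (curves from $m$ to $t_1(m)$, $t_2(m)$, $t_3(m)$) and then disposes of compositions of translations with the one-line remark that ``exactly the same technique'' applies, whereas you stratify by the homotopy class $(m,n,k)\in\ZZ^3$ and handle every class with $(m,n)\neq(0,0)$ at once by observing that the horizontal projection is $1$-Lipschitz onto the flat torus $\RR^2/2\Delta$, whose systole is $4a$; the remaining classes $(0,0,k)$ follow from the winding estimate $\int|z'|\,dt\ge 2\pi|k|$. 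This makes the reduction of the mixed classes a genuine argument rather than an appeal to analogy, and your explicit horizontal and vertical loops realizing the two values match the equality cases the paper indicates. So: same approach at heart, but your packaging closes the small gap the paper leaves implicit.
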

\begin{proof}

Let $\gamma$ be a curve in $(\RR^3,h)$, from $m(x_0,y_0,z_0)$ to $t_1(m)$, then
$$l(\gamma) \geq  \int{\sqrt{x'^2+y'^2+\psi (x,y)z'^2}dt} \geq \int{x'dt} \geq 4a$$
Now, if $\gamma$ is a curve from $m(x_0,y_0,z_0)$ to $t_2(m)$ we find the same way as before that  $l(\gamma) \geq 4a$.
Finally, for a curve $\gamma$ from $m$ to $t_3(m)$, we have
$$l(\gamma)= \int{\sqrt{x'^2+y'^2+\psi (x,y) z'^2}dt} \geq \int{\inf (\psi) z'dt}= 2\pi \cos a \sqrt{2}$$
the equality is obtained for the points of the edges of the square prism $D_p$.
Using the same technique we can prove that the distance between a point $m$ and its image by the composition of several translations is greater or equal to $\inf \{4a,2\pi \cos a \sqrt{2}\}$.

\end{proof}
\subsection{The torus $(T,g_{hex}) $}\label{hextorus}

Suppose now that the lattice $\Delta$ is hexagonal and generated by two vectors of norm $2a >0$. 
The lattice $2 \Delta \times 2 \pi \ZZ$ is generated by the translations $T_1:(x,y,z) \longrightarrow (x+4a,y,z) $, $T_2: (x,y,z) \longrightarrow (x+2a,y +2a \sqrt{3}, z)$ and $T_3: (x,y,z) \longrightarrow (x, y, z+2 \pi)$. The manifold we get is a singular torus that we denote by $(T,g_{hex})$.

\begin{remarque} \label{isomghex} The symetries with respect to the surfaces $x=pa$, $y+\frac{x}{\sqrt{3}}= \frac{2pa}{\sqrt{3}}$ and $y-\frac{x}{\sqrt{3}}= \frac{2pa}{\sqrt{3}}$, give in the quotient isometries of $(T,g_{hex})$.

\end{remarque}

\begin{lem} \label{sysghex}
The systole of $(T,g_{hex})$ is equal to $\inf \{4a, 2 \pi \cos (2a/ \sqrt{3}) \}$.
\end{lem}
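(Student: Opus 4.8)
The plan is to imitate the proof of Lemma \ref{sysgc}, replacing the square lattice by the hexagonal one; the only genuinely new ingredient is the covering radius of $\Delta$. Since $(T,g_{hex})=(\RR^3,h)/(2\Delta\times 2\pi\ZZ)$ is a torus, every non-contractible loop lifts to a path $\gamma$ in $(\RR^3,h)$ joining a point $m=(x_0,y_0,z_0)$ to $\tau(m)$ for some nontrivial translation $\tau=t_{(p,q,r)}$ with $(p,q)\in 2\Delta$ and $r\in 2\pi\ZZ$. Writing $l(\gamma)=\int\sqrt{x'^2+y'^2+\psi(x,y)z'^2}\,dt$, I would bound $l(\gamma)$ from below according to whether the horizontal part $(p,q)$ vanishes.

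If $(p,q)\neq 0$, then dropping the nonnegative term $\psi(x,y)z'^2$ gives $l(\gamma)\ge\int\sqrt{x'^2+y'^2}\,dt$, the Euclidean length of the projection of $\gamma$ to the $(x,y)$-plane; since this projection joins $(x_0,y_0)$ to $(x_0+p,y_0+q)$ we get $l(\gamma)\ge |(p,q)|$. As $\Delta$ is the hexagonal lattice generated by vectors of norm $2a$, its shortest nonzero vector has norm $2a$, hence that of $2\Delta$ has norm $4a$ and $l(\gamma)\ge 4a$. If instead $(p,q)=0$ but $r=2\pi k$ with $k\neq 0$, then dropping the horizontal terms yields $l(\gamma)\ge\int\sqrt{\psi}\,|z'|\,dt\ge (\inf\sqrt{\psi})\,|r|\ge 2\pi\inf\sqrt{\psi}$. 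When both parts are nonzero the horizontal bound $l(\gamma)\ge 4a$ already suffices.

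The key point is therefore to show $\inf\psi=\cos^2(2a/\sqrt{3})$, i.e. that the covering radius of $\Delta$ (the largest distance from a plane point to $\Delta$) equals $2a/\sqrt{3}$. The Dirichlet--Voronoï cell of $\Delta$ is a regular hexagon, and the farthest points from $\Delta$ are its vertices, which are the circumcenters of the equilateral triangles of side $2a$ tiling the plane; such a triangle has circumradius $2a/\sqrt{3}$. As $2a/\sqrt{3}<\pi/2$ and $\cos$ is decreasing on $[0,\pi/2]$, we obtain $\psi\ge\cos^2(2a/\sqrt{3})>0$ everywhere, with equality exactly over the vertices of the Voronoï cells, that is along the vertical edges of the prisms $D_p$. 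Combining the two cases, every non-contractible loop has length at least $\inf\{4a,\,2\pi\cos(2a/\sqrt{3})\}$.

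It remains to exhibit loops attaining each bound. A straight horizontal segment in the direction of a shortest vector of $2\Delta$ has $h$-length $4a$ and descends to a non-contractible closed geodesic of $(T,g_{hex})$. A vertical segment of $z$-length $2\pi$ lying over an edge of a prism $D_p$ sits above a single point at distance $2a/\sqrt{3}$ from $\Delta$, where $\psi\equiv\cos^2(2a/\sqrt{3})$ is constant, so its $h$-length is $2\pi\cos(2a/\sqrt{3})$; it closes up via $T_3$ into a non-contractible loop. Hence $\mathrm{Sys}(T,g_{hex})\le\inf\{4a,\,2\pi\cos(2a/\sqrt{3})\}$, and with the lower bound the two coincide. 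The only step that differs substantively from Lemma \ref{sysgc} is the covering-radius computation; the symmetries recorded in Remark \ref{isomghex} may be used to confirm that the extremal vertical geodesics do lie on the singular locus.
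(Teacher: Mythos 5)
Your proof is correct and takes essentially the same approach as the paper: lift a non-contractible loop to a path in $(\RR^3,h)$ joining a point to its image under a deck translation, drop terms in the integrand $\sqrt{x'^2+y'^2+\psi z'^2}$ to bound the horizontal and vertical cases separately, and exhibit the extremal curves (a horizontal shortest vector of $2\Delta$, and a vertical segment over an edge of a prism $D_p$). Your write-up is in fact slightly more complete than the paper's: the dichotomy on the horizontal component $(p,q)$ handles compositions of translations uniformly where the paper only asserts it, and your explicit covering-radius computation $2a/\sqrt{3}$ confirms the value $2\pi\cos(2a/\sqrt{3})$ in the statement (the paper's proof twice miswrites it as $2\pi\cos(2a\sqrt{3})$).
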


\begin{proof}

For any curve $\gamma$ from $m$ to $T_1(m)$ we have
$$l(\gamma) \geq  \int{\sqrt{x'^2+y'^2+\psi (x,y)z'^2}dt} \geq \int{x'dt} \geq 4a$$
the same inequality holds for any curve from $m$ to $T_2(m)$ since the situation is invariant by the rotation $r_{a_3,\pi/3}$ of angle $\pi/3$ around the axis $z$.\\
Finally, for any curve $\gamma$ from $m$ to $T_3(m)$, we have
$$l(\gamma)= \int{\sqrt{x'^2+y'^2+\psi (x,y) z'^2}dt} \geq \int{\inf (\psi) z'dt}= 2\pi \cos (2a \sqrt{3})$$
the equality is achieved for the points of the edges of the hexagonal prisms $D_p$.
The distance between a point $m$ and its  image by the composition of several translations is greater or equal to $\inf \{4a,2\pi \cos (2a \sqrt{3})\}$.

\end{proof}

\section{The systolic ratio of $C_2$} \label{C2}

\subsection{The systolic ratio of flat metrics}
The volume is equal to $\frac{1}{2} \det(a_2,a_1) |a_3|$ and the  systole is equal to $\inf\{|a_3|/2,s\}$, where $s$ is the systole of the flat torus of dimension $2$ defined by the lattice of basis $a_1,a_2$. We normalize such that $\frac{1}{2}|a_3|=1$, then the systolic ratio is equal to

$$
\frac{s^3}{\det(a_1,a_2)}\quad\hbox{if $s\le 1$ and}\quad \frac{1}{\det(a_1,a_2)}
\quad\hbox{if $s\ge 1$,}
$$
 
Since we have $\frac{s^2}{\det(a_1,a_2)} \leq \frac{2}{\sqrt{3}}$ (lattice of dimension $2$), the systolic ratio is less or equal to $2/\sqrt{3}$.

\subsection{A singular metric on  $C_2$ better than the flat ones} \label{totgeod}

We start by giving the following useful observations:

\begin{itemize}

\item The metric $h$ defined in section \ref{r3h} can be written locally (in the domain D) in cylindrical coordinates in the form $h=dr^2+r^2d\theta ^2 +\cos ^2r dz^2$ ($r$ is the distance to the vertical axes going through the center $p$ of the prism $D_p$, and $\theta$ is the angle  with respect to the axis $"x"$). \\

\item In restriction to a prism $D_p$, a surface $S_p$ of equation $\theta = \theta_0$ is totally geodesic. To see this, just remark that the length of any curve  $\gamma$ in $D_p$ joining two points of $\theta = \theta_0$ is always greater than its projection on this surface. This is simply due to the expression of the metric in the "cylindrical" coordinates: 
$$l(\gamma)= \int{\sqrt{{r'}^2+r^2 {\theta'}^2+ \cos^2r {z'}^2}dt} \geq \int{\sqrt{{r'}^2+ \cos^2r {z'}^2}}$$

\end{itemize}

\begin{lem} \label{miniproj}
Let $\gamma$ be a curve of the universal Riemannian covering of $(T,g_{hex})$ and $\gamma'$ its minimal projection on a hexagonal prism $D_p$, then we have $l(\gamma) \geq l(\gamma')$. (The \emph{minimal projection} of a point $m$ is here the point of $D_p$ 
at a minimal distance (Euclidean) of $m$. It is unique since $D_p$ is convex).
\end{lem}

\begin{proof}

If the minimal orthogonal projection is completely inside the singularity $x=pa$, i.e. if $\gamma'$ is in such a hypersurface, then

$$l(\gamma) =\int{\sqrt{x'^2+y'^2+\psi (x,y)z'^2}dt} \geq \int{\sqrt{y'^2+\psi (x,y)z'^2}dt}= l(\gamma')$$

but the situation is invariant by a rotation of angle $\pi/3$ around $p$; this shows that if $\gamma$ is projected on the surfaces $y+\frac{x}{\sqrt{3}}= \frac{2pa}{\sqrt{3}}$ or $y-\frac{x}{\sqrt{3}}= \frac{2pa}{\sqrt{3}}$ of the singularity, we have $l(\gamma) \geq l(\gamma')$. Finally, the result is true for any curve projected anywhere on the singularity.

\end{proof}

\begin{remarque} \label{parallel}
In fact the previous lemma holds even if we take the minimal projection on a hexagonal prism \emph{inside} $D_p$ and parallel to it. A prism is parallel to $D_p$ if every plane consisting its boundary is parallel to a plane of the boundary of $D_p$. This remark will be used in the improvement of the systolic ratio of the manifold $C_3$.

\end{remarque}

We denote by $\sigma$ the map $(x,y,z)\mapsto (-x,-y,z+\pi)$ (see type $C_2$ in section \ref{classi}),
and the correponding quotient maps.

\begin{remarque}

In the torus $(T,g_{hex})$ seen in section \ref{toresing}, the transformation $\sigma$ keeps $4$ geodesics globally invariant, these are the vertical axes containing the  $4$ centers of the (quotiented) prisms that constitute a fundamental domain of $C_2$.

\end{remarque}

\begin{lem} \label{distsigma}
For any point $m(r_0,\theta_0,z_0)$ in $(T,g_{hex})$ we have 
$$d_{(T,g_{hex})}(m,\sigma(m)) \geq \pi.$$ 
The equality is achieved for a geodesic of the surface $\theta=\theta_0$.
\end{lem}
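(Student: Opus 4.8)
The plan is to lift everything to the universal cover $(\RR^3,h)$ and to compute the distance there. I fix the lift $\tilde m=(r_0,\theta_0,z_0)$ of $m$, the cylindrical coordinates being those around the vertical axis through the centre of the prism $D_0$ that contains $m$; recall that in these coordinates $g=dr^2+r^2\,d\theta^2+\cos^2 r\,dz^2$. The isometry $\sigma$ lifts to $(x,y,z)\mapsto(-x,-y,z+\pi)$, that is, to $(r,\theta,z)\mapsto(r,\theta+\pi,z+\pi)$, so $\sigma\tilde m=(r_0,\theta_0+\pi,z_0+\pi)$. Because the Dirichlet cell $A_0$ is centrally symmetric, \emph{both} $\tilde m$ and $\sigma\tilde m$ lie in the same vertical prism $D_0$. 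Given any curve joining them in $(\RR^3,h)$, Lemma \ref{miniproj} lets me replace it by its minimal projection onto $D_0$ without increasing its length; since the endpoints already lie in $D_0$, it is enough to bound from below the lengths of curves lying in $(D_0,g)$.

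The heart of the argument is a comparison of $(D_0,g)$ with the round sphere $S^3$. I would introduce the map
$$\Phi(r,\theta,z)=\big(\sin r\cos\theta,\ \sin r\sin\theta,\ \cos r\cos z,\ \cos r\sin z\big)\in S^3\subset\RR^4,$$
which is exactly $S^3$ written in Hopf (toroidal) coordinates. A direct computation gives $\Phi^\ast g_{S^3}=dr^2+\sin^2 r\,d\theta^2+\cos^2 r\,dz^2$, and since $\sin^2 r\le r^2$ for $0\le r<\pi/2$ we get $\Phi^\ast g_{S^3}\le g$ as quadratic forms. Hence $\Phi$ is distance non-increasing. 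Now $\Phi(\sigma\tilde m)=-\Phi(\tilde m)$, because replacing $\theta_0$ by $\theta_0+\pi$ and $z_0$ by $z_0+\pi$ reverses the sign of all four coordinates; the two images are therefore antipodal on $S^3$, at distance $\pi$. Consequently every curve in $D_0$ from $\tilde m$ to $\sigma\tilde m$ has length at least $d_{S^3}(\Phi\tilde m,\Phi\sigma\tilde m)=\pi$, which gives $d_{(\RR^3,h)}(\tilde m,\sigma\tilde m)\ge\pi$.

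To conclude in $(T,g_{hex})=\RR^3/(2\Delta\times 2\pi\ZZ)$ I must check that \emph{every} lift $\eta\cdot\sigma\tilde m$ (with $\eta\in 2\Delta\times 2\pi\ZZ$) is at distance at least $\pi$. The purely vertical lifts stay in the column $D_0$, and since $\Phi$ is $2\pi$-periodic in $z$ they are again sent to $-\Phi(\tilde m)$, so the estimate above applies verbatim and is sharp. For the horizontal lifts the target sits in another column, so the path must cross at least one singular wall and simultaneously climb $\Delta z=\pi$; here I would combine the global $1$-Lipschitz projection $G(x,y,z)=\big(\cos\rho\cos z,\cos\rho\sin z,\sin\rho\big)$ onto $S^2$, where $\rho=d((x,y),\Delta)$ (it satisfies $g\ge G^\ast g_{S^2}$ because $|\nabla\rho|\le1$), together with the horizontal-length bound $\int\sqrt{x'^2+y'^2}\,dt$ used in Lemmas \ref{sysgc} and \ref{sysghex}, to show these competitors are in fact strictly longer than $\pi$. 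Taking the minimum over all $\eta$ then yields $d_{(T,g_{hex})}(m,\sigma m)\ge\pi$. Equality is realised by the great semicircle joining the antipodal pair $m,\sigma m$ inside the totally geodesic sphere cut out by the plane $\theta=\theta_0$ through the axis (its induced metric $dr^2+\cos^2 r\,dz^2$ is round, with the axis as equator); as $r_0<\pi/2$ this arc stays inside $D_0$ and has length exactly $\pi$.

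I expect the main obstacle to be precisely the construction of the comparison map: the naive projection of Remark \ref{totgeod} onto a single meridian $\theta=\theta_0$ folds $m$ and $\sigma m$ onto the \emph{same} side of the axis and only produces the too-weak bound $\pi-2r_0$, so it is essential to retain the angular variable and to land in $S^3$ rather than $S^2$, exploiting the inequality $\sin^2 r\le r^2$. The remaining technical nuisance is the bookkeeping over the deck group, and in particular verifying that no horizontal translate furnishes a shortcut — this is where the combined spherical-and-horizontal estimate, rather than either bound alone, is needed.
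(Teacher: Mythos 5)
Your reduction to the prism $D_0$ (via Lemma \ref{miniproj}) followed by the comparison map $\Phi$ into the round $S^3$ is correct, and it is a genuinely different route from the paper's: on $D_0$ one indeed has $\Phi^\ast g_{S^3}=dr^2+\sin^2 r\,d\theta^2+\cos^2 r\,dz^2\le g$, and $\Phi(\sigma\tilde m)=-\Phi(\tilde m)$, so every competitor joining $\tilde m$ to the lift of $\sigma(m)$ lying in its own column (or to a vertical translate of it, by $2\pi$-periodicity of $\Phi$ in $z$) has length at least $\pi$. You are also right about the defect you point out in the printed argument: the projection of Remark \ref{totgeod} sends $\sigma(\tilde m)$ to the point $(r_0,\theta_0,z_0+\pi)$ on the \emph{same} side of the axis, which is not antipodal on the meridian sphere, so as written that step only yields a bound of order $\pi-2r_0$; retaining the angular variable and mapping to $S^3$ repairs exactly this. (One small fix for your equality statement: take the tilted great semicircle whose latitude stays bounded by $r_0$; the semicircle through the pole may leave $D_0$.)

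The genuine gap is your treatment of the lifts sitting in other columns, which is only announced, and the announced strategy cannot work. Take $a=\pi/4$, $\delta=(2a,0)\in\Delta$, and $m$ the wall point with lift $\tilde m=(a,0,0)$; the lift $T_1\sigma_0\tilde m=(3a,0,\pi)$ of $\sigma(m)$ is the image of $\tilde m$ under the horizontal point symmetry $s_\delta$ about the axis of $D_\delta$ followed by the vertical shift $\pi$. The distance between $\tilde m$ and this lift is \emph{exactly} $\pi$: the tilted great semicircle of the meridian sphere $\{y=0\}$ of $D_\delta$ joins them without leaving $D_\delta$. In particular your hoped-for conclusion that such competitors are ``strictly longer than $\pi$'' is false, and your two estimates cannot even reach $\pi$ here: the map $G$ sends the two endpoints to latitude $a$ with longitudes differing by $\pi$, giving only $\pi-2a=\pi/2$; the horizontal displacement gives $2a=\pi/2$; and the strongest merge of the two (Minkowski, $l\ge\sqrt{l_H^2+l_V^2}$ with $l_H\ge 2a$ and $l_V\ge\pi\cos(2a/\sqrt 3)$) gives roughly $2.5<\pi$, whereas a valid argument must be \emph{sharp} in this configuration. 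What is missing is the paper's folding step: the reflections in the singular walls are isometries of $h$ (Remark \ref{isomghex}), so reflecting the portions of a competitor lying outside the middle prism $D_\delta$ (and minimally projecting, as in Lemma \ref{miniproj}, whatever wanders farther away) produces a curve \emph{inside} $D_\delta$ joining two points conjugate under $s_\delta+(0,0,\pi)$; your own map $\Phi$, recentred at $\delta$, then finishes that case. With this step inserted your proof is complete, and arguably more robust than the printed one.
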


\begin{proof}

Let $m(r_0,\theta_0,z_0)$ be a point in $D_p$, and $\gamma$ a curve in $(\RR^3,h)$ from $m$ to $\sigma(m)$. If $\gamma$ stays in $D_p$, then by Remark \ref{totgeod} we have $l(\gamma) \geq l(pr(\gamma))$ where $pr(\gamma)$ is the projection of $\gamma$ on the surface $\theta = \theta_0$. But $l(pr(\gamma)) \geq \pi$ since the metric on this surface is spherical ($dr^2 +cos^2r d\theta ^2$). 
Now if $\gamma$ gets out of the prism $D_p$, let $\gamma'$ be the curve obtained by taking the projection (minimal) of the part of $\gamma$ outside $D_p$ on the boundary $\partial D_p$, and by leaving the part inside $D_p$ unchanged. Then $\gamma'$ is a curve of $D_p$ from $m$ to $\sigma(m)$. Its length is greater or equal to $\pi$ (using the same argument of projection on the surface $\theta = \theta_0$). We conclude that $l(\gamma) \geq l(\gamma')$.

Then we have to calculate  in $(\RR^3,h)$ (a lower bound of) the distance to (a lift of) $\sigma(m)$ of the images of $m$ by translations. We denote by $\sigma_0$ any lift of $\sigma$ in $(\RR^3,h)$. If we translate $m$ by $T_3$, the situation will be equivalent to the one above since $T_3(m)$ and $\sigma_0(m)$ are conjugate by $\sigma_0^{-1}$. Now a curve $\gamma$ in $(\RR^3,h)$ from $\sigma_0(m)$ to $T_1(m)$ should go through at least $3$ domains $D_p$. Among these let $D'$ be the domain that neither contains $\sigma_0(m)$ nor $T_1(m)$. 
\begin{itemize}

\item If $\gamma$ stays in these three domains, let $\gamma'$ be the curve obtained by taking symmetrics of the parts of  $\gamma$ outside $D'$ with respect to the singular "plane" of $\partial D'$ beside the curve (see fig.3.1). The curve $\gamma'$ is in $D'$, it joins two conjugate points by the transformation $\sigma_0$, then $l(\gamma) \geq l(\gamma') \geq \pi$ (above argument). 

\item If $\gamma$ gets out of these domains, let $\gamma'$ be the curve obtained by projecting the part of $\gamma$ outside $D'$ on its boundary $\partial D'$. We get a continuous curve in $D'$ joining two conjugate points by $\sigma_0$, we conclude that $l(\gamma) \geq l(\gamma') \geq \pi$. 
\end{itemize}

Finally, note that the distance to $\sigma_0(m)$ of the composition of several translations of $m$ is too large by  arguments similar to those above.

\end{proof}

\begin{figure}
\begin{center}
\includegraphics[scale=1]{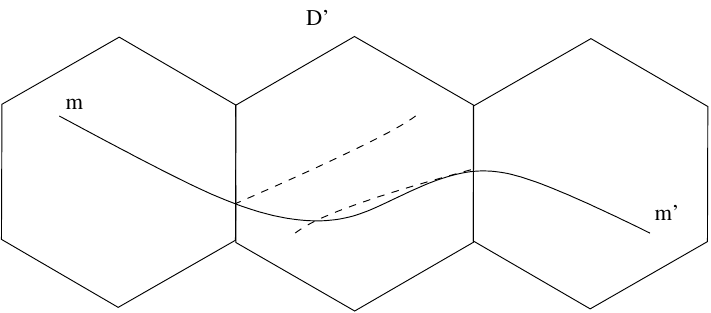}
\caption{A curve joining $m$ to $m'= T_1(\sigma(m))$ will go through $3$ domains $D_p$. For the parts of this curve outside $D'$ we take their symetrics with respect to the boundary $D'$.}
\end{center}
\end{figure}

\begin{remarque} \label{distgc}
The two preceding lemmas are also true for the torus $(T,g_c)$ and can be proved exactly the same way.
\end{remarque}

Summing up this discussion, we get the following result.

\begin{theo}
\label{c2sing}
Take the family of  (singular) metrics on $C_2$ defined as follows. 
\begin{itemize}
\item
Take a hexagonal lattice $\Delta$ in the plane.
\item
Equip the $3$-space with the metric
$$h_{x,y,z}=dx^2+dy^2+ \cos^2f(x,y,z)dz^2,\quad \textrm{with}\ f(x,y,z)=\mathrm{dist}\big((x,y),\Delta)\big)$$
\item
Take the quotient
by the group generated by $2\Delta$ and $\sigma:(x,y,z)\mapsto (-x,-y,z+\pi)$ 
\end{itemize}.

If the length of the shortest vector in $\Delta$ is equal to $\frac{\pi}{2}$ then 
$$
\frac{{\mathrm{Sys}}^3}{Vol}> \frac{2}{\sqrt 3}
$$
In other words : this metric has a bigger systolic ratio than the biggest systolic ratio of flat metrics on $C_2$.
\end{theo}

\begin{proof} This metric is just a quotient of the metric $(T,g_{hex})$
which was described in \ref{hextorus}. Denote it by $(C_2,g_{hex})$. It
depends on the real parameter $a$ ;  the minimal length of vectors in
the lattice $\Delta$ is $2a$.

The volume of $(C_2,g_{hex})$ is equal to $$\int_{0}^{\pi}  \iint_{D}  \cos \sqrt{x^2+y^2} dy dx dz $$ where $D$ is a regular hexagon of shortest distance between its opposite edges equal to $2a$.

The systole is equal to $$\inf \big \{Sys(T,g_{hex}),\inf \{dist_{(T,g_{hex})}(m,\sigma(m))\}\big \}$$ 
By Lemmas \ref{sysghex} and \ref{distsigma}, it is equal to $ \inf \{4a,2\pi\cos (a \sqrt{2}), \pi \} $. Then, for $a= \pi/4$, we have $Sys(C_2,g_{hex})= \pi$. Using the software "Maple" we find an approximation of the volume ($\simeq$ 2,80), then a simple calculation gives the systolic ratio $Sys^3(C_2,g_{hex})/Vol(C_2,g_{hex}) \simeq 1,38$.

\end{proof}

\subsection{The manifold $C_2$ as a quotient of the torus $(T,g_c)$}

To get a manifold homeomorphic to $C_2$, we can take an arbitrary plane lattice $\Delta$, then consider the quotient of $(\RR^3,h)$ by the same transformations as before. To increase the most the systolic ratio, $\Delta$ should have the smallest volume possible, i.e. it should be hexagonal. It is nevertheless interesting to get this manifold as a quotient of the torus $(T,g_c)$, i.e. when  $\Delta$ is the "special" square lattice. We denote by $(C_2,g_c)$ the quotient of $(T,g_c)$ by the subgroup generated by $\sigma$ ($(x,y,z)\mapsto (-x,-y,z+\pi)$).

When $a= \pi/4$, the intersection of $(T,g_c)$ with one of the planes $x=0$ or $y=0$ is the covering torus of the Klein-Bavard bottle (c.f. \cite{bavard}, see also \cite{E-L}). It turns out that with a good choice of the parameter $a$ the manifold $(C_2,g_c)$ admits a systolic ratio greater than $\sqrt{3}/2$, and the calculation is based, as in the case of $(T,g_{hex})$, on the fact that the distance in $(T,g_c)$ between a point and its image by $\sigma$ is greater than $\pi$ (c.f. \ref{distsigma}).

\begin{prop}
If the length $a$ of the shortest vector of $\Delta$ satisfies the equation $2a- \pi \cos a\sqrt{2}=0$, then the systolic ratio $\frac{Sys^3(C_2,g_c)}{Vol(C_2,g_c)}$ is greater than  $2/\sqrt{3} $. It is approximately equal to $1,18$.
\end{prop}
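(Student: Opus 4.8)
The plan is to follow the same scheme as the preceding theorem on $(C_2,g_{hex})$, replacing the hexagonal estimates by the square ones of Lemma \ref{sysgc} and Remark \ref{distgc}. First I would determine the systole. Since $\pi_1(C_2)=\Gamma$ is generated by the translation lattice $L=2\Delta\times 2\pi\ZZ$ together with the class of $\sigma$, and $\sigma^2=t_3\in L$, the group splits as $\Gamma=L\sqcup\sigma L$. Computing the systole as the minimal displacement $\inf_m d(m,\gamma m)$ in the universal cover $(\RR^3,h)$ over all $\gamma\neq 1$, the elements of $L$ contribute $\mathrm{Sys}(T,g_c)$ while those of $\sigma L$ contribute $\inf_m d_{(T,g_c)}(m,\sigma(m))$, so that
$$
\mathrm{Sys}(C_2,g_c)=\inf\Big\{\mathrm{Sys}(T,g_c),\ \inf_{m} d_{(T,g_c)}\big(m,\sigma(m)\big)\Big\}.
$$
By Lemma \ref{sysgc} the first term is $\inf\{4a,2\pi\cos(a\sqrt2)\}$, and by Lemma \ref{distsigma} together with Remark \ref{distgc} the second is $\geq\pi$ with equality. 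Hence $\mathrm{Sys}(C_2,g_c)=\inf\{4a,\,2\pi\cos(a\sqrt2),\,\pi\}$.

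Next I would exploit the hypothesis. The equation $2a-\pi\cos(a\sqrt2)=0$ is exactly $4a=2\pi\cos(a\sqrt2)$, so the two ``torus'' terms coincide; since $a\mapsto 2a-\pi\cos(a\sqrt2)$ is strictly increasing on $(0,\pi/2\sqrt2)$ it has a single solution $a\simeq 0.755$, for which $4a\simeq 3.02<\pi$. Therefore at this value of $a$ the contribution coming from $\sigma$ does not bind, and
$$
\mathrm{Sys}(C_2,g_c)=4a.
$$

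Finally I would compute the volume. As $C_2$ is the free quotient of $T$ by the order-two isometry $\sigma$, we have $\mathrm{Vol}(C_2,g_c)=\tfrac12\mathrm{Vol}(T,g_c)$. Writing the Riemannian measure of $h$ as $\cos\big(d((x,y),\Delta)\big)\,dx\,dy\,dz$ and integrating over a fundamental domain (four square cells in the $(x,y)$ variables, with $z\in[0,\pi]$), this gives
$$
\mathrm{Vol}(C_2,g_c)=4\pi\iint_{[-a,a]^2}\cos\sqrt{x^2+y^2}\,dx\,dy.
$$
Evaluating this integral numerically at $a\simeq 0.755$ (with Maple, as in the hexagonal case) and forming the ratio $\dfrac{(4a)^3}{\mathrm{Vol}(C_2,g_c)}$ yields a value close to $1.18$, which is strictly larger than $2/\sqrt3\simeq 1.155$.

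The routine parts are the volume integral and the elementary verification that the solution of $2a=\pi\cos(a\sqrt2)$ satisfies $4a<\pi$. The genuine content, already supplied by the earlier results, is the lower bound $d_{(T,g_c)}(m,\sigma(m))\geq\pi$ (Lemma \ref{distsigma} via Remark \ref{distgc}): without it the class of $\sigma$ could in principle carry short loops and lower the systole. I expect the main care to be needed precisely there, namely in checking that the infimum over the coset $\sigma L$ is realized at the clean value $\pi$ and is not reduced by composing $\sigma$ with lattice translations, which is exactly what the projection and reflection argument of Lemma \ref{distsigma} guarantees for the square torus as well.
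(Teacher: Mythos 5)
Your proposal is correct and follows exactly the route the paper intends: the paper states this proposition without a detailed proof, but the surrounding text prescribes precisely your argument (systole of the quotient as $\inf\{\mathrm{Sys}(T,g_c),\ \inf_m d(m,\sigma(m))\}$ via Lemma \ref{sysgc} and the transfer of Lemma \ref{distsigma} to the square torus by Remark \ref{distgc}, then a numerical volume computation), mirroring the proof given for $(C_2,g_{hex})$. Your added observations — that the hypothesis $2a=\pi\cos(a\sqrt2)$ balances the two lattice terms, that $4a<\pi$ so the $\sigma$-coset bound does not bind (unlike the hexagonal case where the systole equals $\pi$), and the explicit volume $4\pi\iint_{[-a,a]^2}\cos\sqrt{x^2+y^2}\,dx\,dy$ — are accurate and consistent with the stated value $\approx 1{,}18$.
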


\section{The systolic ratio of $C_4$, $C_6$, $C_3$ and $C_{2,2}$} \label{Ci}

\subsection{Type $C_4$}

In the flat case, we saw that $C_4$ is the quotient of $C_2$ (the basis $(a_1,a_2,a_3)$ should be orthogonal with $|a_1|=|a_2|$) by the subgroup generated by $t_{a_3/4} \circ r_{a_3,\pi/2}$. Its volume is equal to $|a_1||a_2||a_3|/4$, and the  systole is equal to $\inf \{|a_1|,|a_2|,|a_3|/4\}$. The systolic ratio is less than $1$. \\

The transformation $\tau=t_{a_3/4} \circ r_{a_3,\pi/2}$ is also an isometry of $g_c$ and the quotient of $(C_2,g_c)$  by this isometry  gives a manifold of type $C_4$, we denote it by $(C_4,g_c)$.

\begin{remarque} 
The transformations $\tau$ and $\tau^{-1}$ are of order $4$ in $(T,g_c)$  and keep $2$ geodesics globally invariant.

The transformation $\tau^2$ is of order $2$ in $(T,g_c)$ and keeps, in addition to the geodesics fixed by the transformation $\tau$, $2$ others globally invariant. They are the vertical geodesics going through the points of the lattice $\Delta$ (see fig.2). 

\end{remarque}

\begin{figure}
\begin{center}
\includegraphics[scale=1.5]{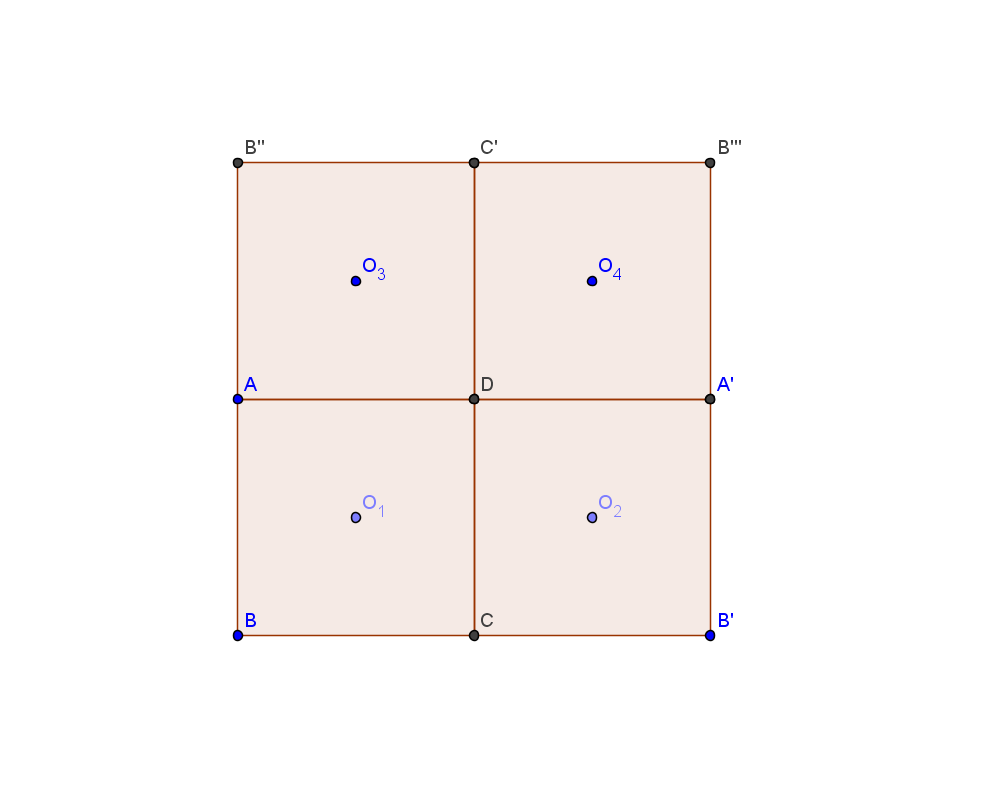}
\caption{The transformations $\tau$ et $\tau^{-1}$ keep fixed
the vertical axes going through the points $O_1$ et $O_4$.
The transformation $\tau^2$ keep fixed these same axes, as well as the vertical ones going through the points $O_3$ and $O_4$.}
\end{center}
\end{figure}

\begin{theo}\label{c4sing}

Consider the singular metric on $C_4$ given as follows. 
\begin{itemize}
\item
Take a square lattice $\Delta$ in the plane.
\item
Equip the $3$-space with the metric
$$h_{x,y,z}=dx^2+dy^2+ \cos^2f(x,y,z)dz^2,\quad \textrm{with}\ f(x,y,z)=\mathrm{dist}\big((x,y),\Delta)\big)$$
\item
Take the quotient
by the group generated by $2\Delta$ and $\tau:(x,y,z)\mapsto (-y,x,z+\frac{\pi}{2})$
\end{itemize}

If the length of the shortest vector of $\Delta$ is equal to $\frac{\pi}{4}$ then 
the systole is equal to $\pi/2$ and the systolic ratio is greater than $1$.

In other words : this metric has a bigger systolic ratio than the biggest systolic ratio of flat metrics on $C_4$.
\end{theo}

\begin{proof}

Denote by $(C_4,g_c)$ this metric. It depends on the real parameter $a$, where $2a$
is the lengnt of the shortest vector in $\Delta$.
The systole of $(T,g_c)$ is equal to $\inf \{4a,2\pi \cos (a \sqrt{2})\}$. By proposition \ref{distgc} we know that $d(m,\tau^2 (m)) \geq \pi$ ($\tau^2= \sigma$), therefore we are reduced to finding a "good" lower bound of $\tau$. Using the triangular inequality in $(T,g_c)$, we have 
$$d(m,\tau ^2(m)) \leq d(m,\tau(m)) + d(\tau (m),\tau ^2 (m)) $$
but $d(p,\tau (p)) = d(\tau(p),\tau ^2(p))$ since $\tau$ is an isometry of $(T,g_c)$. Then
$d(m,\tau (m)) \geq \pi/2$, and the equality is achieved for the points $m$ of the rotation axis. Note  that using the same method, we get a good lower bound of $\tau^3= \tau^{-1}$.\\
Finally for $a=\pi/8$ the systole of $(C_4,g_c)$ is equal to $\pi/2$. The volume is equal to $$ 4 \int_{0}^{\frac{\pi}{2}} \int_{-\frac{\pi}{8}}^{\frac{\pi}{8}} \int_{-\frac{\pi}{8}}^{\frac{\pi}{8}}  \cos \sqrt{x^2+y^2} dx dy dz $$
Using  Maple, we find the systolic ratio of our manifold, it is approximately equal to $1,05 > 1$.

\end{proof}

\subsection{Type $C_6$}

In the flat case, the volume is equal to $\frac{1}{6} det(a_1,a_2) |a_3|$ and the systole is equal to $\inf\{|a_3|/6,s\}$, where $s$ is the systole of the flat 2-dimensional torus defined by the lattice of basis $a_1,a_2$. Considering the usual normalisation $\frac{1}{6}|a_3|=1$, the systolic ratio is equal to

$$
\frac{s^3}{\det(a_1,a_2)}\quad\hbox{if $s\le 1$ and}\quad \frac{1}{\det(a_1,a_2)}
\quad\hbox{if $s\ge 1$,}
$$
 
It is less than $2/\sqrt{3}$.\\

Now to improve this systolic ratio, we start with the hexagonal torus $(T,g_{hex})$ defined in \ref{toresing}, since the lattice $\Delta$ should be hexagonal. To get the manifold $C_6$, we take the quotient of $(T,g_{hex})$ by the subgroup generated by the isometry $\phi=t_{a_3/6} \circ r_{a_3,\pi/3}$, the resulting manifold is $(C_6,g_{hex})$.

\begin{remarque}

The transformations $\phi$ and $\phi^{-1}$ are of order $6$ in $(T,g_{hex})$ and keep only one geodesic globally invariant.

The transformations $\phi^2$ and $\phi^4$ are of order $3$ in $(T,g_{hex})$ and keep, in addition to the one of $\phi$, $2$ vertical geodesics globally invariant.

The transformation $\phi^3$ is of order $2$ and keeps, in addition to the one kept invariant by $\phi$, $3$ vertical geodesics globally invariant (see fig.3).

\end{remarque}

\begin{figure}
\begin{center}
\includegraphics[scale=1]{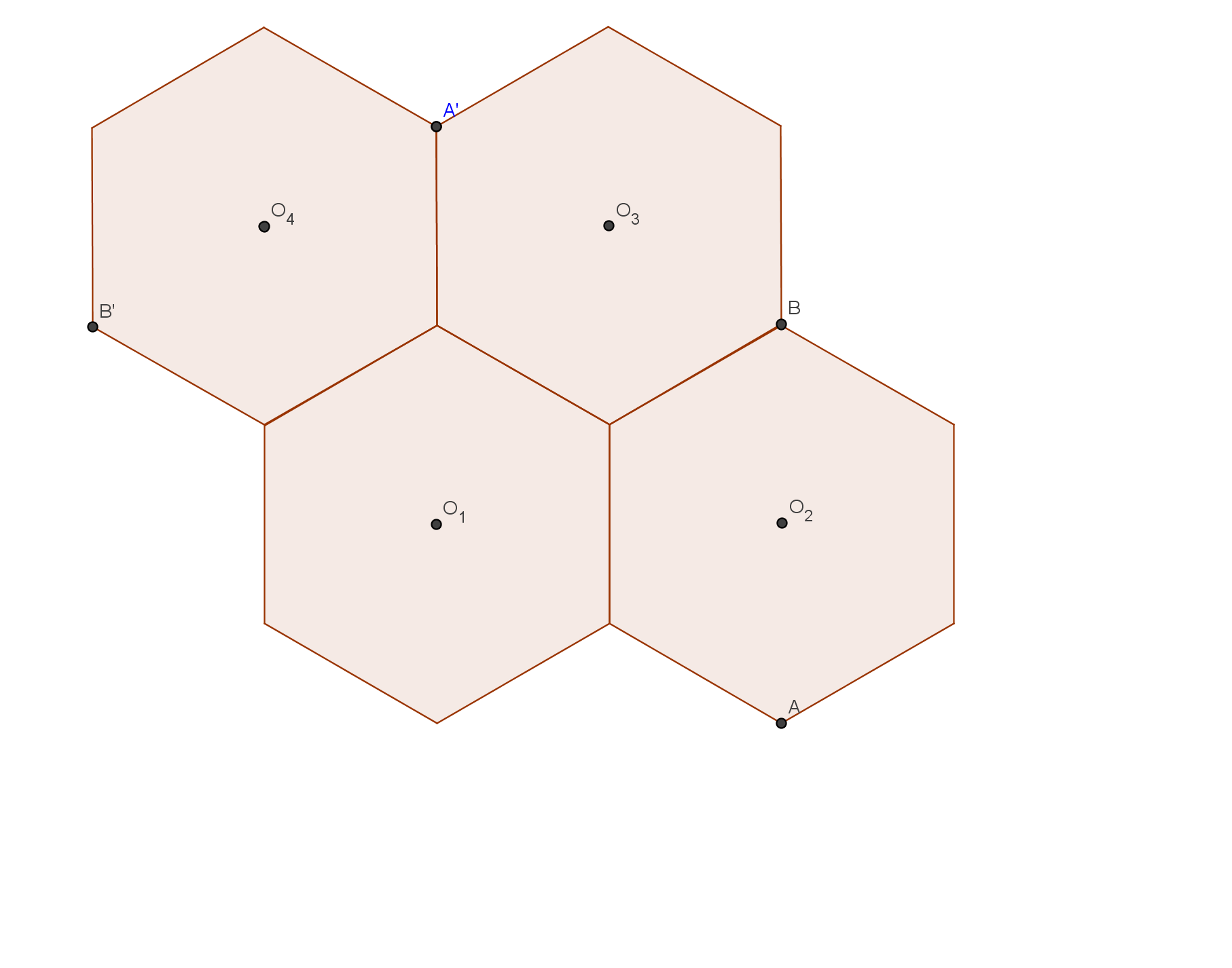}
\caption{The transformations $\phi$ and $\phi^{-1}$ only keep fixed the vertical axis going through the point $O_1$.
The transformations $\phi^2$ and $\phi^{-2}$ keep fixed, in addition to the axis going through $O_1$, the vertical axes
going through the points $A$ et $B$.
The transformation $\phi^3$ keeps fixed, in addition to these three axes, the vertical ones going through the points $O_i$, ($i=2,3,4$).}
\end{center}
\end{figure}

This discussion gives the following result.

\begin{theo} If the norm of the shortest vector of $\Delta$ is $\pi/6$,
the quotient metric on $C_6$ with respect to the metric $(T,g_{hex})$ (i.e. the manifold $(C_6,g_{hex})$)
has a systolic ratio bigger than $2/\sqrt{3}$, namely bigger
than the best systolic ratio for flat metrics on $C_6$.

\end{theo}

\begin{proof}
The proof is exactly the same as for theorem \ref{c4sing}.
\end{proof}

\subsection{Type $C_{2,2}$}

In the flat case, the systole is equal to $\inf \{ a_1/2,a_2/2,a_3/2 \}$. The volume is equal to $\frac{|a_1||a_2||a_3|}{4}$. The systolic ratio is less than $1/2$, the equality is achieved if and only if $|a_1|=|a_2|=|a_3|$. In this case the systolic geodesics are isolated and therefore do not cover the manifold $C_{2,2}$. 

The criterion seen in the introduction allows us to conclude that the flat metrics on $C_{2,2}$ are not the best for the isosystolic inequality.

\subsection{Type $C_3$}

In the flat case,  
the volume is equal to $\frac{1}{3} \det(a_1,a_2) |a_3|$ but also to 
$\frac{\sqrt 3}{6}\vert a_1\vert \vert a_3\vert$,  and the systole is equal to $\inf\{|a_3|/3,\vert a_1\vert\}$, since the lattice generated by $a_1$ and
$a_2$ is hexagonal.
We conclude that the systolic ratio is less or equal to $2/\sqrt{3}$.
The equality is achieved for $|a_3|=3|a_2|=3|a_1|$.

To improve this systolic ratio, we start with the hexagonal torus $(T,g_{hex})$ defined in section \ref{toresing}.
To get the manifold $C_3$, we take the quotient of $(T,g_{hex})$ by the subgroup generated by the isometry $\varphi=t_{a_3/3} \circ r_{a_3,2\pi/3}$, the result is the manifold $(C_3,g_{hex})$.

Since the manifold $C_3$ is not a quotient of $C_2$, it does not contain surfaces that are Klein bottles or M\"obius bands, therefore our previous methods of getting a lower bound for the systole cannot be applied (it still satisfies the property of having isolated systolic geodesics ). Therefore, a special and more general argument is necessary.\\

Let $\varphi_c$ be the isometry of $(T,g_{hex})$ which sends $(p,z) $ to the point $(r_{2\pi/3}(p),z +  c)$. The quotient of $(T,g_{hex})$ by the subgroup generated by $\varphi _c$ is clearly a manifold homeomorphic to $C_3$, we denote it by $(C_3,g_{hex}^c)$ (here we suppose that the vertical translation in $(T,g_{hex})$ is $(p,z)\mapsto (p,z+2c)$). Let $\gamma$ be the vertical geodesic (closed by $\varphi_c$) in a domain $D_p$ in $(C_3,g_{hex}^c)$ going through the point $p$, it has length equal to $c$. Now let $H$ be a piecewise smooth variation of $\gamma$ through geodesics joining a point $m$ to  $\varphi_c(m)$, we impose that these curves stay in $D_p$ and do not touch the singularity. 

\begin{lem}
The second variation of $H$ at the curve $\gamma$ is strictly positive if $0<c<2\pi /3$.

\end{lem}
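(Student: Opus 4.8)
The plan is to set up the standard second variation (index) form for the vertical geodesic $\gamma$, use the rotational symmetry of $g_{hex}$ to make the curvature term completely explicit, and then show that the twisted boundary condition coming from $\varphi_c$ forces positivity of the index form exactly when $c<2\pi/3$.

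First I would work in the interior of $D_p$, where by Remark \ref{totgeod} the metric reads $g=dr^2+r^2d\theta^2+\cos^2 r\,dz^2$, and pass near the axis to the Cartesian form $g=dx^2+dy^2+\cos^2(\sqrt{x^2+y^2})\,dz^2$. Since $\gamma$ is the central axis and the variations are small and avoid $\partial D_p$, everything stays in the smooth region. Here $\gamma=\{x=y=0\}$ is a unit-speed geodesic (as $\cos 0=1$), and a short Christoffel computation shows $\Gamma^k_{zx},\Gamma^k_{zy}$ vanish on the axis, so $\{\partial_x,\partial_y\}$ is a parallel orthonormal frame of the normal bundle along $\gamma$. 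Moreover $g$ is a warped product over flat $\RR^2$ with warping factor $\cos r$, whose Hessian at the origin is $-\mathrm{Id}$; hence every plane through $\dot\gamma$ has sectional curvature $+1$, and by rotational symmetry the curvature operator $V\mapsto R(V,\dot\gamma)\dot\gamma$ is the identity on the normal plane.

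Next, because $H$ varies $\gamma$ through geodesics, the variation field $V$ is a Jacobi field; in the parallel frame (identified with $\mathbb{C}$ via $w=v_1+iv_2$) the Jacobi equation becomes $w''+w=0$, with solutions $w(z)=\alpha\cos z+\beta\sin z$. The curves join $m_s$ to $\varphi_c(m_s)$, so the endpoints obey $V(c)=d\varphi_c\,V(0)$; since $\varphi_c$ rotates the normal plane by $2\pi/3$, this reads $w(c)=\omega\,w(0)$ with $\omega=e^{2\pi i/3}$. For a Jacobi field the index form reduces to the boundary term $I(V,V)=\mathrm{Re}(w'\bar w)\big|_0^c$. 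Substituting $\beta=\frac{\omega-\cos c}{\sin c}\alpha$ (forced by the boundary condition, valid since $0<c<\pi$) and computing, one gets $I(V,V)=|\alpha|^2\,\frac{1+2\cos c}{\sin c}$, where the value $\mathrm{Re}(\omega)=-\tfrac12$ is precisely what produces the coefficient $1+2\cos c$. As $\sin c>0$ on $(0,\pi)$, this is strictly positive exactly when $1+2\cos c>0$, i.e. $c<2\pi/3$, which is the claimed threshold.

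The curvature computation is routine; the crux is the interplay between the twisting angle $2\pi/3$ and the conjugate-point structure of the spherical normal directions, with $\mathrm{Re}(\omega)=-1/2$ pinning the threshold at $2\pi/3$ (the same scheme with rotation angle $\psi$ gives the threshold $c<\psi$, matching the analogous treatment of $C_4$ and $C_6$). The delicate point to handle with care is that the admissible variations are exactly those respecting $w(c)=\omega w(0)$: no single totally geodesic slice $\theta=\mathrm{const}$ is preserved by $\varphi_c$, so the genuinely three-dimensional, twisted boundary condition must be used. Working with the intrinsic index form — equivalently, choosing the transverse endpoint curves $s\mapsto m_s$ to be geodesics, so that the acceleration boundary term $\langle\nabla_s\partial_s H,\dot\gamma\rangle\big|_0^c$ drops — then yields the strict local minimality of $\gamma$.
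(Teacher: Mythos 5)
Your proof is correct and takes essentially the same route as the paper: both arguments reduce the second variation to the boundary term of the index form for a Jacobi field satisfying $V''+V=0$ (curvature $+1$ in planes containing $\dot\gamma$) with the twisted boundary condition $V(c)=d\varphi_c\,V(0)$ given by the $2\pi/3$ rotation, and the sign is governed by $1+2\cos c>0$. Your complex-variable bookkeeping, explicit justification of the curvature computation, and handling of the endpoint acceleration term $\langle T,\nabla_V V\rangle\big|_0^c$ are somewhat more careful than the paper's, but the substance is identical.
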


\begin{proof}

Let $O$ be a small tubular neighborhood of $\gamma$ and let $\Omega$ be the set of geodesics in $D_p$ from $m_t \in O$ to $\varphi_c(m_t)$ that do not touch the singularity (one parameter family since the situation is invariant under rotation around $\gamma$). Then

$$H:]-\epsilon,\epsilon[ \longrightarrow \Omega  $$ 

$$t \longrightarrow \gamma_t:[0,1] \rightarrow  (C_3,g_{hex}^c)$$

is such that $H(0)= \gamma$. Let $T=\frac{\partial \gamma_t}{\partial s}$ (velocity vector of $\gamma_t$), and $V=\frac{\partial \gamma}{\partial t}|_{\gamma_t}$ (the Jacobi field along $\gamma$), and set $L=\int_0^c{|T|ds}$. We have then
$$\frac{\partial L}{\partial t}=\frac{1}{L(t)} \int_0^c{g_{hex}^c(V,\nabla_V T) ds} $$ 

since $\nabla_T V-\nabla_V T=[V,T]=0$ we get

$$L \frac{\partial L}{\partial t} = [g_{hex}^c(V,T)]_0^c  \qquad \text{(1st variation formula)}$$

Now
$$ \frac{\partial}{\partial t}(L \frac{\partial L}{\partial t})= (\frac{\partial L}{\partial t})^2 +L \frac{\partial ^2 L}{\partial t^2}$$

$$= \int_0^c{(|\nabla _T V|^2 +g_{hex}^c(T,\nabla _V\nabla_T V))ds}$$

$$=\int_0^c{|\nabla _T V|^2} +\int_0^c {g_{hex}^c(T,\nabla _T\nabla_V V)}+ \int_0^c{g_{hex}^c(R(V,T)V,T)}$$ where $R$ is the curvature tensor of $g_{hex}^c$. Now since the curvature in the direction of the plane $(T,V)$ is equal to $1$ we get

$$L \frac{\partial ^2 L}{\partial t^2}=\int_0^c {|\nabla _T V|^2}-L^2\int_0^c{|V|^2}$$

$$=\int_0^c{(\nabla_T g_{hex}^c(V,\nabla_T V)-g_{hex}^c(V,R(T,V)T))}-L^2\int_0^c{|V|^2}= g_{hex}^c(V,\nabla_T V)|_0^c$$

(see \cite{cheeger} p. 20 for more details on the second variation formula).

Now $V$ is a Jacobi Field orthogonal to $\gamma$ and so can be written in the form $V= f_1 E_1 +f_2 E_2$, where $(E_1, E_2)$ is an orthonormal basis of the (horizontal) plane and parallel along $\gamma$. We can suppose that $V(0)=E_1$ and $V(c)=E_1 \cos{(2\pi/3)} + E_2 \sin{(2\pi/3)}$. Now solving the Jacobi Field equation $V''+V=0$ we get $f_1(s)=\cos{(s)}+ \frac{\cos{(2\pi/3)-\cos{(c)}}}{\sin{(c)}}\sin{(s)}$ and $f_2(s)= \frac{\sin{(2\pi/3)}}{\sin{(c)}}\sin{(s)}$.\\

Finally  $$g_{hex}^c(V,\nabla_T V)|_0^c=f_2(c)f_2'(c) + f_1(c)f_1'(c)-f_1'(0)$$
$$= \sin^2(2\pi/3)(\cos(c)-\cos(2\pi/3))+\cos(2\pi/3)(\cos^2(c)-\cos^2(2\pi/3))$$

\end{proof}

\begin{remarque}
This lemma shows that there exists a neighbourhood $U$ of the geodesic $\gamma$ in which $\gamma$ is of minimum length among the geodesics joining any point $m$ to  $\varphi_c(m)$.

\end{remarque}

\begin{theo}
If the norm of the shortest vector of $\Delta$ is $\pi/3$,
the quotient metric on $C_3$ with respect to the metric $(T,g_{hex})$ (i.e. the manifold $(C_3,g_{hex})$)
has a systolic ratio bigger than $2/\sqrt{3}$, namely bigger
than the best systolic ratio for flat metrics on $C_3$.

\end{theo}

\begin{proof}

We consider in the neighborhood $U$ a hexagon $H$ "parallel" (c.f. remark \ref{parallel}) to the boundary $\partial D_p$. Let $\delta$ be a curve in  $(\RR^3,g_{hex})$ from a point $m$ in $D_p$ to $\varphi_c(m)$, the minimal projection of $\delta$ on the boundary $\partial H$ gives a curve $\delta '$ in $U$ joining two conjugate points by the transformation $\varphi_c$. Then we have by lemma \ref{miniproj} and remark \ref{parallel}

$$l(\delta) \geq l(\delta ') \geq c$$

The same arguments as those used in lemma \ref{distsigma} (section \ref{C2}) show that $d_{(T,g_{hex})}(m,\varphi_c(m)) \geq c$.\\

Now, passing to the limit when $c\rightarrow 2\pi/3$ we get 
$$ d_{(T,g_{hex})}(m,\varphi(m)) \geq 2\pi/3$$

This allows us to calculate the systole of $(C_3,g_{hex})$, when $a= \pi/6$: it is equal to $2\pi/3$. The volume is equal to 

$$\int_{0}^{\frac{2\pi}{3}} \iint_{D}  \cos \sqrt{x^2+y^2} dy dx dz $$

As before we calculate this integral using Maple, and we get an approximation of $\frac{Sys(C_3,g_{hex})^3}{Vol(C_3,g_{hex})} \simeq 1.24$.

\end{proof}

\begin{remarque}
The previous proof is also valid for the manifolds $(C_6,g_{hex})$ and $(C_4,g_c)$, and allows us to find a good lower bound for their systoles. But the method we used for these manifolds is a lot more simple (we just used the triangular inequality).

\end{remarque}

\section{Extremality of $(C_2,g_{hex})$ in its conformal class}\label{c2ext}

\begin{theo}\label{ext} The metric $g_{hex}$ on the manifold $C_2$ (see theorem \ref{c2sing}) is extremal in its conformal class.

\end{theo}

The proof of this theorem relies on the following result of C. Bavard \cite{bavard3}.\\

Let $(M,g)$ be a compact essential Riemannian manifold of dimension $n$ and let $\Gamma$ be the space of the systolic curves of $(M,g)$. For every Radon measure $\mu$ on $\Gamma$, we associate a measure $^* \mu $ on $M$ by setting for $\varphi \in C^0(M,\RR)$\\

$$ <^* \mu, \varphi>= <\mu,\overline{\varphi}>$$

where $\overline{\varphi}(\gamma)=\int{\varphi \circ \gamma (s) ds}$, $ds$ is the arc length of $\gamma$ with respect to $g$. Then we have

\begin{theo} (\cite{bavard2},\cite{bavard3} and \cite{jenk})

The Riemannian manifold $(M,g)$ is minimal in its conformal class if and only if there exists a positive measure $\mu$, of mass $1$, on $\Gamma$ such that

$$^*\mu=\frac{Sys(g)}{Vol(g)} \cdot dg$$

where $dg$ is the volume measure of $(M,g)$.

\end{theo}

\begin{proof}[Proof of theorem \ref{ext}]

A fundamental domain for the action of
$2\Delta\times 2\pi\ZZ$ is constituted of four hexagonal prisms
$D_p$. We consider in a domain $D_p$ the Riemannian metric $g_{hex}$
written in the form $g=dr^2+r^2d\theta^2 +\cos ^2 (r) dz^2$. Then we consider in the surface
$\theta=\varphi=constant$ (inside $D_p$) the geodesic ${\gamma_{(\varphi,0)}} ^a$ going
through the points $(0,0,-\pi/2)$, $(0,0,\pi/2)$ and $(a,0,0)$, where
$0 \leq a \leq f(\varphi)$ and $f$ is a $\pi/3$-periodic function
defined in $[0,\pi/3]$ by

$$ f(\varphi)= \left\lbrace \begin{array}{ll}
\frac{\pi}{4\cos \varphi} &  if\ \ 0\leq \varphi \leq \pi/6\\
\frac{\pi}{2(\cos \varphi +\sqrt{3}\sin \varphi)} & if\ \ \pi/6 \leq \varphi \leq \pi/3\\
\end{array} \right.$$

\begin{remarque} 

Inside a domain $D_p$, any surface of equation
$\theta=constant$ is a M\"obius band with boundary or a Klein bottle of constant
positive curvature. The maximum of the $r$-component is determined
by the function $f$.

\end{remarque}

Now, we consider the images ${\gamma_{\varphi,\phi}}^a$ of ${\gamma_{(\varphi,0)}} ^a$ by the
isometries $(r,\theta,z) \mapsto (r,\theta,z+\phi)$. We put on the space $\Gamma=\{{\gamma_{\varphi,\phi}}^a: \varphi \in \RR/\pi\ZZ$, $\phi \in \RR/ \pi \ZZ$, and $0\leq a \leq f(\varphi)\}$ the following mesure:\\
$$\mu=h(a,\varphi)da\otimes d \varphi \otimes d \phi$$

\begin{remarque} 

The function $h$ depends on two parameters
($a$ and $\varphi$) since the isometry group of $g_{hex}$ is
one dimensional.
\end{remarque}

We write ${\gamma_{\varphi,\phi}}^a(t)=(r(t,a),\varphi,\phi+t)$, where $r$ is a $C^1$ piecewise function.\\
Let $\psi$ be a continuous function on $(C_2,g_{hex})$, then\\

$$< ^* \mu, \psi>=\int_{a=0}^{f(\varphi)} \int_{\varphi=-\pi/2}^{\pi/2}\int_{\phi=-\pi/2}^{\pi/2}\int_{t=-\pi/2}^{\pi/2}{\psi(r(t,a),\varphi,\phi+t)\sqrt{\cos^2(r(t,a))+\Big (\frac{\partial r}{\partial t}(t,a)\Big )^2}h(a,\varphi)dt\ d\phi\ d\varphi \ da} $$

The changes of variables: $x=r (t,a)$, $\varphi=y$ and $z=\phi+t$ give

$$ < ^* \mu, \psi>=2 \int_{y=-\pi/2}^{\pi/2}\int_{z=-\pi/2}^{\pi/2}\int_{x=0}^{f(y)}\int_{a=x}^{f(y)}{\varphi(x,y,z)\sqrt{ \Big (\frac{\cos(y)}{\alpha(y,a)}\Big)^2+1 }\ h(a,y)dy\ dz\ dx\ da} $$

where we set $\alpha(x,a)=\frac{\partial r}{\partial t}(t,a)$.\\

Finally

$$^* \mu(r,\theta,z)= 2 \frac{\chi (r\leq f(\theta))}{r\cos(r)} \int_{a=r}^{f(\theta)}\sqrt{ \Big (\frac{\cos(r)}{\alpha(r,a)}\Big)^2+1}\ h(a,\theta)da \ dg_{hex}(r,\theta,z)$$

To calculate the function $\alpha(y,a)$, we will use the fact that
the curves ${\gamma_{\varphi,\phi}}^a$ are geodesics. Let
$g_{hex}^\varphi$ be the metric induced by $g_{hex}$ on the
hypersurface $\theta=\varphi$. Then $g_{hex}^\varphi=dr^2+\cos^2(r) dz^2$
can be written in the form $g(y)(dx^2+dy^2)$ and the geodesics $\{
{\gamma_{\varphi,\phi}}^a:\ 0 \leq a \leq f(\varphi), \phi \in
\RR/\pi\ZZ \}$ will have to satisfy the condition (see \cite{pu})
$$\frac{d}{dx}\Big ( y'\big ( \frac{g(x)}{1+{y'}^2} \big )^{1/2}\Big )=0 $$

where $y'=\frac{dy}{dx}$. In our case, we have $g(x)= \cos^2(r)$ and
$y'= \frac{\cos (r)}{\alpha(r,a)}$. We get 
$$\alpha^2(r,a)=\frac{\cos ^2(r)}{\cos ^2(a)}\big ( \cos ^2(r)-\cos ^2(a) \big )$$ 

Finally we have

$$^* \mu(r,\theta,z)= 2 \frac{\chi (r\leq f(\theta))}{r} \Big( \int_{a=r}^{f(\theta)}{\big ( \cos ^2(r)-\cos ^2 (a) \big )^{-\frac{1}{2}} h(a,\theta)da} \Big) \ dg_{hex}(r,\theta,z)$$

Now we are capable of calculating the function $h$. It should
satisfy the equation

$$^* \mu= dg_{hex}$$
on the band $r \leq f(\theta)$. Then we have

$$2\int_{a=r}^{f(\theta)}{\big ( \cos ^2(r)-\cos ^2 (a) \big )^{-\frac{1}{2}} h(a,\theta)da}=r$$

We put $z=\cos ^2(a)-\cos^2(f(\theta))$ and  $t=\cos^2 r-\cos^2(f(\theta))$,
we get the equation

$$\int_0^t{(t-z)^{-1/2}\beta(z,\theta)dz=\delta(t)} $$

where $\delta(t)=\cos^{-1}(\sqrt{t+\cos^2(f(\theta))})$. The solution of
this equation is

$$\beta(z)=\frac{1}{\pi}\Big ( \frac{\delta(0)}{\sqrt{z}} + \int_0^z (z-x)^{1/2}\delta '(x) dx \Big) $$

We can easily verify that the integral in the preceding expression
does converge and that

$$ \beta(z,\theta)=\frac{1}{\pi}\Big( \frac{f(\theta)}{\sqrt{t}}-\int_0^z\frac{dx}{2\big((z-x)(x+\cos ^2(f(\theta)))(1-x-\cos^2(f(\theta)))\big)^{1/2}} \Big)  $$

Finally
$$ h(a,\theta)=\frac{4\cos(a)sin(a)}{\pi} \Big( \frac{f(\theta)}{\sqrt{\cos ^2 a-\cos^2(f(\theta))}}- I(a,\theta)\Big )$$

where
$$  I(a,\theta)=\int_0^{\cos^2 a-\cos^2(f(\theta))} \frac{dx}{2\big((\cos ^2a-\cos^2(f(\theta))-x)(x+\cos ^2(f(\theta)))(1-x-\cos^2(f(\theta)))\big)^{1/2}}$$

We define the same way $\mu$ on the the family  of systolic
geodesics $s(\Gamma)$ where $s$ is the symmetry with respect to the
boundary of the hexagonal prism $D_p$.

Finally we have $^*\mu=dg_{hex}$, which proves that $g_{hex}$ is
extremal in its conformal class.

\end{proof}

\begin{remarque} The previous result still holds for metrics on $C_2$ obtained by taking the quotient of $(\RR^3,h)$ by the group generated by $2\Delta$ and $\sigma$ with no condition on the parameter $a$ (the length of the shortest vector in $\Delta$) but the calculations become a bit more complicated.

\end{remarque}

\section{Comparison between $(C_2,g_{hex})$ and the face-centered cubic torus} \label{torusC2}

Among flat tori of dimension $3$, the face-centered cubic one is the best for the isosystolic inequality. It is the quotient of $\RR^3$ by a face-centered cubic lattice. It is known that this torus, that we denote by $T_{fcc}$, is a very good candidate to realize the systolic constant of tori of dimension $3$. It satisfies the following properties: \\

\begin{itemize}

\item At any point in $T_{fcc}$ there exists exactly $6$ systolic geodesics going through the point. 
 
\item The systolic geodesics of any systolic class of $T_{fcc}$ cover the torus. A systolic class is an element of the fundamental group that contains at least one systolic geodesic.

\item It is extremal (for the isosystolic inequality) in its conformal class.

\end{itemize}

Our singular metric $(C_2,g_{hex})$ verifies the second and third properties, and a stronger one than the first: At any point outside the singularity of $(C_2,g_{hex})$, there exists infinitely many systolic geodesics going through the point.\\

For the points on the singularity, there are $4$ systolic geodesics going through any of these points: $3$ in the horizontal flat 2-torus and $1$ in the surface $\theta= constant$. The singularity subset anyway has zero measure.

We think that as for the face-centered cubic torus, the manifold $(C_2,g_{hex})$ is a very good candidate to realize the systolic constant because it has an abondance of systolic geodesics that can be seen by the fact that it satisfies the properties mentioned above.\\

When speaking about the metrics $(C_3,g_{hex})$,$(C_6,g_{hex})$ and $(C_4,g_c)$, they still satisfy the property of being covered by systolic geodesics mentioned in the introduction. But we cannot say if they satisfy something stronger as for the manifold $(C_2,g_{hex})$ since we do not have much information about the length of the vertical geodesics.\\ 

The following table gives a comparison between the greatest systolic ratio among the flat metrics on each one of the manifolds $C_2,C_3,C_4$ and $C_6$ ($\tau$(flat)) and the greatest systolic ratio among the singular metrics we have constructed on these manifolds ($\tau$(singular)).

\begin{table}[htbp]\renewcommand{\arraystretch}{1.2}
\begin{center}
\begin{tabular}[b]{|*{5}{c|}}
   \hline
type    &$\tau$(flat)&approximate value&$\tau$(singular)\\
   \hline
   $C_2$&$\frac{2}{\sqrt{3}}$&$\thickapprox  1,154$&$\thickapprox 1,38$\\
   \hline
   $C_3$&$\frac{2}{\sqrt{3}}$&$\thickapprox  1,154$&$\thickapprox 1,24$\\
   \hline
   $C_4$&$1$&$1$&$\thickapprox 1,05$\\
   \hline
   $C_6$&$\frac{2}{\sqrt{3}}$&$\thickapprox  1,154$&$\thickapprox 1,18$\\
   \hline
   \end{tabular}\end{center}
\end{table}

\emph{Acknowledgements:} The first author is grateful to Benoit Michel for useful discussions and remarks  especially on \emph{lemma 5}. He also thanks the "Lebanese Association for Scientific Research" for hospitality and support. 
Both authors thank the referee for constructive criticism.

\end{document}